\theoremstyle{plain}
\newtheorem{thm}[equation]{Theorem}
\newtheorem{pro}[equation]{Proposition}
\newtheorem{cor}[equation]{Corollary}
\theoremstyle{definition}
\newtheorem{alg}[equation]{Algorithm}
\newtheorem{ass}[equation]{Assumption}
\theoremstyle{remark}
\newtheorem{rem}[equation]{Remark}
\numberwithin{equation}{section}
\newcommand{\1}{\mathbf{1}}
\renewcommand{\d}{\mathrm{d}}
\newcommand{\E}{\mathbb{E}}
\renewcommand{\P}{\mathbb{P}}
\renewcommand{\phi}{\varphi}
\newcommand{\R}{\mathbb{R}}
\newcommand{\dist}{\mathrm{dist}}
\title[WOS for Helmholtz and Yukawa Equations]{Walk on Spheres Algorithm for Helmholtz and Yukawa Equations via Duffin Correspondence}
\author{Xuxin Yang}
\address{Xuxin Yang\\
Hunan First Normal University\\
Department of Mathematics\\
Changsha, Hunan 410205\\
P.R. of CHINA}
\author{Antti Rasila}
\address{Antti Rasila\\
Aalto University\\
School of Science\\
Department of Mathematics and Systems Analysis\\
P.O.Box 1100\\
FIN-00076 Aalto\\
FINLAND}
\author{Tommi Sottinen}
\address{Tommi Sottinen\\
University of Vaasa\\
Faculty of Technology \\
Department of Mathematics and Statistics\\
P.O.Box 700\\
FIN-65101 Vaasa\\
FINLAND}
\thanks{
This work is supported by the NNSF of China (No. 11571088), Hunan Provincial Natural Science Foundation of China
(14JJ7083), a Key Project Supported by Scientific Research Fund of Hunan Provincial Education Department (14A028), 
Scientific Research Fund of Hunan Provincial Education Department (14C0253), the Aid Program for Science and Technology Innovative
Research Team in Higher Educational Institutions of Hunan Province. A. Rasila was partially supported by Academy of Finland (No. 289576).
T. Sottinen was partially funded by the Finnish Cultural Foundation (National Foundations' Professor Pool).  This work was done during T. Sottinen's research visit to Hunan First Normal University, Changsha, PRC.  T. Sottinen wishes to thank them for their hospitality.}
\keywords{
Brownian motion;
Helmholtz equation;
Linearized Poisson--Boltzmann equation;
Monte Carlo simulation;
Numerical algorithm;
Walk On Spheres algorithm;
Yukawa equation.
}
\subjclass[2010]{65C05; 68U20; 35Q40}
\begin{document}

\begin{abstract}
We show that a constant-potential time-independent Schr\"odinger equation with Dirichlet boundary data can be reformulated as a Laplace equation with Dirichlet boundary data.  With this reformulation, which we call the Duffin correspondence, we provide a classical Walk On Spheres (WOS) algorithm for Monte Carlo simulation of the solutions of the boundary value problem.  We compare the obtained Duffin WOS algorithm with existing modified WOS algorithms.  

\end{abstract}

\maketitle

\section{Introduction}

We consider stochastic simulation (or Monte Carlo simulation) of the solution of a Dirichlet boundary value problem of a time-independent Schr\"odinger equation with constant potential.  For positive potentials this equation is called the Yukawa equation or the linearized Poisson--Boltzmann equation. For zero potential the equation is known as the Laplace equation. For negative potential it is known as the Helmholtz equation.  

The connection between the Dirichlet boundary value problems and the Brownian motion date back to Kakutani \cite{kakutani}, who provided a stochastic representation of the solution of the Laplace equation with Dirichlet boundary conditions in terms of the exit locations of a Brownian motion.  Later, this connection has been extended for the Schr\"odinger equation, cf. \cite{chung-zhao} and references therein.  These stochastic representations for constant non-zero potentials include the exit time of the Brownian motion in addition to the exit location; and in the case of non-constant potential, the stochastic representation depends on the entire path of the Brownian motion up to the time it exits the domain. 

Stochastic representations provide Monte Carlo simulation methods for the solutions of the boundary value problems.  These methods are especially attractive in high dimensions where deterministic methods are typically extremely costly.
The obvious idea to use the stochastic representations is to simulate Brownian particles on the domain with a fine time-mesh.  This is, however, very costly computationally.  In order to avoid the simulation of the trajectory of the Brownian particles very precisely, different algorithms have been proposed. In this paper, we consider different Walk On Spheres algorithms that simulate the Brownian motion only on successive spheres in the domain. 

If one only needs to simulate the exit location, and not the exit time, of the Brownian motion, one can use the very efficient classical Walk On Spheres (WOS) algorithm due to Muller \cite{muller}.  Unfortunately, the stochastic representation involving only the exit locations of Brownian motion corresponds precisely the Laplace equation.
This is our motivation to transform the constant-potential Schr\"odinger equation into a Laplace equation: to make the classical WOS algorithm applicable.  Also, this transformation should be of interest in its own right.

The transformation, the so-called Duffin correspondence, removes the constant potential in the Schr\"odinger equation with the cost of adding one extra dimension to the boundary value problem. The idea of the correspondence is due to Duffin \cite{duffin1}, where the correspondence was used for the Yukawa equation, i.e. for the case of positive constant potential, on the plane.  This was later extended to general Euclidean spaces in \cite{panharmonic}.  In this paper, we extend the Duffin correspondence to cover also the Helmholtz case, i.e. negative constant potential, in general Euclidean spaces.  

Let us note that there are already efficient modified WOS algorithms that simulate the exit time (or its Laplace transform) and the exit location of the Brownian motion that can be used for the constant-potential Schr\"odinger equation studied here. Indeed, such stochastic simulation algorithms have been studied excessively; cf. \cite{woms,woms2,elepov-mikhailov,hwang-mascagni,hwang-mascagni-given}, just to mention few. 
Basically, in a modified WOS algorithm that simulates the Laplace transform of the exit time and the exit location of the Brownian particle, one needs to keep track of a multiplicative weight for the simulated Brownian particle.  We call this algorithm the Weighted Walk On Spheres (WWOS) and recall it in Section \ref{sect:wos-algorithms}.  If the constant potential is negative, then the weight of the Brownian particle can be reinterpreted as independent exponential killing of the particle.  We call this algorithm the Killing Walk On Spheres (KWOS) and recall it in Section \ref{sect:wos-algorithms}. 
Therefore, we admit that there are already efficient algorithms for the problem studied here.  However, it is our opinion that our Duffin correspondence WOS algorithm (DWOS) is of comparable efficiency to the known modified WOS algorithms and has the advantage of being both simpler to comprehend and easier to implement than the modified WOS algorithms known so far.  Indeed, DWOS algorithm is simply the classical WOS algorithm with an added dimension and multiplicatively modified boundary data: there is no need to keep track of any weight or killing.  Moreover, if the WOS algorithm is already implemented, the DWOS algorithm does not need implementation: it is simply the WOS algorithm with different input. 

The rest of the paper is organized as follows. In Section \ref{sect:prelim}, we lay the setting and recall the connection between the Dirichlet boundary value problems and the Brownian motion. In Section \ref{sect:duffin}, we prove our main result, the Duffin correspondence, and the stochastic representation of the solutions of the constant-potential Schr\"odinger equation without the stopping time distribution.  Section \ref{sect:wos-algorithms} is devoted to the different WOS algorithms and their implementations. In Section \ref{sect:comparisons}, we provide examples and comparisons of the different WOS algorithms.  Finally, in Section \ref{sect:conclusions} we draw some some conclusions on the performance of the DWOS algorithm.

\section{Preliminaries}\label{sect:prelim}

Let $D\subset\R^n$ be a domain (i.e., open and connected) satisfying Assumption \ref{ass} below. Let $x=(x_1,\ldots,x_n)$. Denote by
$$
\Delta_x = \sum_{i=1}^n \frac{\partial^2}{\partial x_i^2}
$$
the Laplacian with respect to the variable $x\in \R^n$.

We consider the Dirichlet-type boundary value problem of the Schr\"odinger equation with constant potential $\lambda\in\R$:
\begin{equation}\label{eq:helmholtz-yukawa}
\left\{\begin{array}{rcll}
\frac12 \Delta_x u(x) -\lambda u(x) &=& 0    &\mbox{on } x\in D, \\
                               u(y) &=& f(y) &\mbox{on } y\in \partial D.
\end{array}\right.
\end{equation}
Here $f$ is (continuous and) bounded on $\partial D$. The case $\lambda>0$ corresponds to the Yukawa equation, or the linearized Poisson--Boltzmann equation. The case $\lambda=0$ is the Laplace equation. The case $\lambda<0$ is the Helmholtz equation.

The required regularity conditions for the domain $D$ of a Helmholtz--Laplace--Yukawa type Dirichlet boundary value problem \eqref{eq:helmholtz-yukawa} to admit a unique bounded (strong) solution are best expressed by using probabilistic tools and the Brownian motion:

Let $W$ be a standard $n$-dimensional Brownian motion. Let $\tau_D$ be the first exit time of the Brownian motion $W$ from the domain $D$, i.e., 
$$
\tau_D = \inf\{ t>0\,;\, W_t \not\in D\}.
$$
Here, as always, use the normal convention that 
$$
\inf\varnothing = +\infty.
$$

The following assumptions on the domain $D$ are always in force, although not explicitly stated later:

\begin{ass}\label{ass}
\mbox{} 
\begin{enumerate}
\item
The domain $D$ is \emph{Wiener regular}, i.e.,
\begin{equation*}\label{eq:regular}
\P^y[\tau_D=0] \quad\mbox{for all } y\in\partial D.
\end{equation*}
\item
The domain $D$ is \emph{Wiener small}, i.e.,
\begin{equation*}\label{eq:small}
\P^x[\tau_D<\infty] = 1 \quad\mbox{for all } x\in D.
\end{equation*}
\item
Finally, we assume, by using the terminology of Chung and Zhao \cite{chung-zhao},  that the domain $D$ is \emph{gaugeable}, i.e.,
\begin{equation*}\label{eq:gauge}
\sup_{x\in D} \E^x\left[e^{-\lambda \tau_D}\right] < \infty.
\end{equation*}
\end{enumerate}
\end{ass}

\begin{rem}\label{rem:ass}
\mbox{}
\begin{enumerate}
\item
All domains with piecewise $C^1$ boundary are Wiener regular.
\item
If any projection of the domain $D$ on any subspace $\R^{n'}$, $n'\le n$ is bounded, then $D$ is Wiener bounded.
\item
For $\lambda\ge 0$, the gauge condition \ref{ass}(iii) is vacuous; for $\lambda<0$ it is essential. 

Actually, it follows from \cite[Theorem 4.19]{chung-zhao} that the gauge condition \ref{ass}(iii) is satisfied if and only if $\lambda>\lambda_1(D)$, where $\lambda_1(D)$ is the principal Dirichlet eigenvalue of the negative half-Laplacian, i.e., for $\lambda>\lambda_1(D)$ the boundary value problem 
\begin{equation*}\label{eq:dirichlet-eigenvalue}
\left\{\begin{array}{rcll}
\frac12 \Delta_x u(x) -\lambda u(x) &=& 0 &\mbox{on } x\in D, \\
                                    u(y) &=& 0 &\mbox{on } y\in \partial D.
\end{array}\right.
\end{equation*}
admits only the trivial solution $u(x)\equiv 0$.
By the Rayleigh--Faber--Krahn inequality \cite{krahn}
\begin{equation}\label{eq:rfk}
\lambda_1(D) \ge -\frac12 
m(D)^{-2/n} \left(\frac{\pi^{n/2}}{\Gamma(n/2+1)}\right)^{2/n} j_{n/2-1,1}^2, 
\end{equation}
where $j_{\nu,1}$ is the smallest positive zero of the Bessel function 
\begin{equation}\label{eq:bessel}
J_{\nu}(x) = \sum_{i=0}^\infty 
\frac{(-1)^i}{\Gamma(i+1)\Gamma(i+\nu+1)}\left(\frac{x}{2}\right)^{\nu+2i}.
\end{equation} 
Equality is attained in \eqref{eq:rfk} if and only if $D$ is a ball; e.g. for the unit ball we have
$$
\lambda_1(B_n(0,1)) = -\frac12 j_{n/2-1,1}^2.
$$ 
This relation is pronounced later in formula \eqref{eq:psi-neg}. Numerical approximations of $\lambda_1(B_n(0,1))$ are give in Table \ref{tab:rayleigh}.
\begin{table}[h]
\begin{tabular}{c|c}
$n$ & $\lambda_1(B_n(0,1))$\\ \hline
 1 & -6.283185\\
 2 & -47.46935\\
 3 & -461.7912\\
 4 & -4544.658\\
 5 & -36316.03
 \end{tabular}

 \caption{Numerical approximations of the Rayleigh--Faber--Krahn constant $\lambda_1(B_n(0,1))$ for $n=1,2,\ldots,5$.}\label{tab:rayleigh}
\end{table}
\end{enumerate}
\end{rem}

The following stochastic representation of the bounded solutions to the boundary value problem \eqref{eq:helmholtz-yukawa} is well-known.  See, e.g., \cite[Chapter 4]{chung-zhao} or \cite[Chapter 4]{durrett}:

\begin{pro}[Stochastic representation]\label{pro:kakutani}
The boundary value problem \eqref{eq:helmholtz-yukawa} admits a unique bounded solution given by
$$
u(x) = \E^x\left[ e^{-\lambda\tau_D}f\left(W_{\tau_D}\right)\right].
$$
\end{pro}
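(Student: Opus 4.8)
The plan is to take the right-hand side as the \emph{definition} of $u$, i.e.\ to set $u(x):=\E^x[e^{-\lambda\tau_D}f(W_{\tau_D})]$, and then to check in turn that $u$ is well defined and bounded, that $u$ is a strong solution of \eqref{eq:helmholtz-yukawa}, and that it is the only bounded one. Boundedness is immediate: by Assumption~\ref{ass}(ii), $\tau_D<\infty$ $\P^x$-a.s., so $W_{\tau_D}\in\partial D$ is well defined, and by Assumption~\ref{ass}(iii),
$$
|u(x)|\le\|f\|_\infty\sup_{x\in D}\E^x\big[e^{-\lambda\tau_D}\big]<\infty .
$$
The gauge condition is a genuine restriction only for $\lambda<0$, where the weight $e^{-\lambda\tau_D}=e^{|\lambda|\tau_D}$ is unbounded; it is precisely this condition that will license the limit interchanges below in that case, so every estimate in the negative-potential case has to be routed through Assumption~\ref{ass}(iii).

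The interior equation comes from the strong Markov property of Brownian motion. For any closed ball $\bar B(x,r)\subset D$, writing $\sigma:=\tau_{B(x,r)}$ and conditioning on $\mathcal F_\sigma$ yields the $\lambda$-mean value property $u(x)=\E^x[e^{-\lambda\sigma}u(W_\sigma)]$. By rotational symmetry $W_\sigma$ is uniform on $\partial B(x,r)$ and independent of $\sigma$, so this reads $u(x)=\psi(r)\,(A_r u)(x)$, where $(A_r u)(x)$ is the spherical average of $u$ over $\partial B(x,r)$ and $\psi(r)=\E^0[e^{-\lambda\tau_{B(0,r)}}]$ is an explicit smooth radial function (cf.\ \eqref{eq:psi-neg}). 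From this averaging identity one first gets that $u$ is continuous on $D$, and then --- by the usual convolution/smoothing argument for functions with a mean value property, or equivalently by comparing $u$ on each small ball with the classical solution of the Dirichlet problem there --- that $u\in C^\infty(D)$ with $\tfrac12\Delta u-\lambda u=0$ on $D$; an alternative route is to apply It\^o's formula to $e^{-\lambda t}u(W_t)$ and read off the generator of the killed process.

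For the boundary data, fix $y\in\partial D$; we must show $u(x)\to f(y)$ as $D\ni x\to y$. By Assumption~\ref{ass}(i) the point $y$ is regular, $\P^y[\tau_D=0]=1$, and the Blumenthal zero--one law together with path continuity then forces $\tau_D\to 0$ and $W_{\tau_D}\to y$ in $\P^x$-probability as $x\to y$. Since $f$ is continuous at $y$ and $e^{-\lambda\tau_D}\to 1$, the integrand converges to $f(y)$, and the interchange of limit and expectation is justified by the domination $|e^{-\lambda\tau_D}f(W_{\tau_D})|\le\|f\|_\infty e^{|\lambda|\tau_D}$ (trivial for $\lambda\ge 0$, Assumption~\ref{ass}(iii) for $\lambda<0$). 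Thus $u\in C(\bar D)$, $u=f$ on $\partial D$, and $u$ is a bounded strong solution.

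Uniqueness --- and, applied to $u$ itself, a second confirmation of the formula --- follows from a Dynkin/optional-stopping argument. Let $v$ be any bounded strong solution of \eqref{eq:helmholtz-yukawa}. Exhausting $D$ by subdomains $D_k=\{x\in D:\dist(x,\partial D)>1/k,\ |x|<k\}$ on which $v$ and its first two derivatives are bounded, It\^o's formula shows that $e^{-\lambda(t\wedge\tau_{D_k})}v(W_{t\wedge\tau_{D_k}})$ is a true $\P^x$-martingale, so $v(x)=\E^x[e^{-\lambda(t\wedge\tau_{D_k})}v(W_{t\wedge\tau_{D_k}})]$; letting $k\to\infty$ and then $t\to\infty$, using $\tau_{D_k}\uparrow\tau_D<\infty$ (Assumption~\ref{ass}(ii)), continuity of $v$ up to $\partial D$, and the domination $\|v\|_\infty e^{|\lambda|\tau_D}\in L^1(\P^x)$ (Assumption~\ref{ass}(iii)), we obtain $v(x)=\E^x[e^{-\lambda\tau_D}f(W_{\tau_D})]$. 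I expect the only real obstacle to be the interior-regularity step --- upgrading the probabilistically natural $\lambda$-mean value property to the classical statement $u\in C^2$, $\tfrac12\Delta u-\lambda u=0$ --- together with the bookkeeping that makes all the limit passages legitimate when $\lambda<0$; the remainder is the classical Kakutani--Dynkin circle of ideas and could also simply be quoted from \cite[Chapter~4]{chung-zhao}.
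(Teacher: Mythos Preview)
The paper does not actually prove Proposition~\ref{pro:kakutani}: it states the result as well known and refers the reader to \cite[Chapter~4]{chung-zhao} and \cite[Chapter~4]{durrett}. Your sketch is therefore strictly more than what the paper offers, and it is essentially the standard argument one finds in those references (gauge bound for well-definedness, strong Markov property for the $\lambda$-mean value identity, regularity upgrade to get $C^2$, boundary regularity from Assumption~\ref{ass}(i), and optional stopping for uniqueness).

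One point worth tightening: in the boundary step you invoke ``domination by $\|f\|_\infty e^{|\lambda|\tau_D}$'' to pass to the limit as $x\to y$, but the underlying measures $\P^x$ are varying with $x$, so the classical dominated convergence theorem does not apply as stated. What is actually needed is uniform integrability of $\{e^{-\lambda\tau_D}\}$ under $\P^x$ as $x\to y$; this does follow from the gauge condition via the gauge theorem (continuity of $x\mapsto\E^x[e^{-\lambda\tau_D}]$ up to $\partial D$), but it is a nontrivial consequence rather than an immediate bound. You correctly anticipate that the interior-regularity upgrade is the other soft spot; both of these are exactly the places where one would, in practice, cite \cite{chung-zhao} --- which is what the paper does wholesale.
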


\section{Duffin correspondence}\label{sect:duffin}

Associated with the Helmholtz or the Yukawa boundary value problem \eqref{eq:helmholtz-yukawa} with $\lambda\ne 0$, there is a classical Laplace boundary value problem with $\lambda=0$ on an extended domain $\bar D(\lambda)$:  Indeed, define $g(\lambda;\cdot)\colon\R\to\R$ by
\begin{equation*}\label{eq:g}
g(\lambda;x') =
\left\{\begin{array}{rl}
\cos\left(\sqrt{2\lambda}\, x'\right), & \mbox{for } \lambda>0, \\
\cosh\left(\sqrt{-2\lambda}\, x'\right), & \mbox{for } \lambda<0.
\end{array}\right.
\end{equation*}  
Set
\begin{equation*}\label{eq:I}
I(\lambda) = \left\{
\begin{array}{rl}
\left(-\frac{\pi}{2\sqrt{2\lambda}}, \frac{\pi}{2\sqrt{2\lambda}}\right), & \mbox{for } \lambda > 0, \\
\R, & \mbox{for } \lambda <0,
\end{array}
\right.
\end{equation*}
and
\begin{equation*}\label{eq:D-bar}
\bar D(\lambda) = D\times I(\lambda).
\end{equation*}
Finally, denote $\bar x =(x,x')\in \R^{n}\times \R$, and set 
\begin{eqnarray*}\label{u-bar}
\bar u(\lambda;\bar x) &=& u(x)g(\lambda;x'), \\
\label{eq:f-bar}
\bar f(\lambda;\bar y) &=& f(y)g(\lambda;y'). 
\end{eqnarray*}
With this notation, consider the following family of Laplace boundary value problems indexed by $\lambda\ne 0$:
\begin{equation}\label{eq:laplace}
\left\{\begin{array}{rcll}
\frac12 \Delta_{\bar x} \bar u(\lambda;\bar x) &=& 0    &\mbox{on } \bar x\in \bar D(\lambda), \\
                              \bar u(\lambda;\bar y) &=& \bar f(\lambda; \bar y) &\mbox{on } \bar y\in \partial \bar D(\lambda).
\end{array}\right.
\end{equation}

\begin{thm}[Duffin correspondence]\label{thm:duffin}
Let $\lambda\ne 0$ be fixed. Then $u$ is the unique bounded solution to the Helmholtz or Yukawa boundary value problem \eqref{eq:helmholtz-yukawa} if and only if $\bar u$ is the unique bounded solution to the Laplace equation \eqref{eq:laplace}.
\end{thm}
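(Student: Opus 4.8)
The plan is to split the ``if and only if'' into three nearly independent pieces: (a) a separation-of-variables identity turning $\tfrac12\Delta_x u-\lambda u=0$ into $\tfrac12\Delta_{\bar x}\bar u=0$; (b) a matching of the boundary data; (c) transporting the words ``unique bounded'' across the correspondence. Parts (a) and (b) are short computations; part (c), and specifically the Helmholtz case $\lambda<0$, is where the real work is.

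First I would record the three facts about $g(\lambda;\cdot)$ that drive everything: on $I(\lambda)$ it satisfies the ODE $\tfrac12 g''+\lambda g=0$, it is strictly positive on $I(\lambda)$, and for $\lambda>0$ it vanishes precisely at the two endpoints $\p I(\lambda)=\{\pm\pi/(2\sqrt{2\lambda})\}$ (while $\p I(\lambda)=\varnothing$ for $\lambda<0$). Since $\bar u(\lambda;\bar x)=u(x)g(\lambda;x')$ is a product, the chain rule gives $\Delta_{\bar x}\bar u=g(\lambda;x')\bigl(\Delta_x u(x)-2\lambda u(x)\bigr)$, and positivity of $g(\lambda;\cdot)$ shows that $\tfrac12\Delta_{\bar x}\bar u=0$ on $\bar D(\lambda)$ is equivalent to $\tfrac12\Delta_x u-\lambda u=0$ on $D$. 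That is part (a). For part (b) I would use that $\p\bar D(\lambda)=\p D\times\overline{I(\lambda)}$ together with, for $\lambda>0$ only, the ``horizontal'' piece $\overline D\times\p I(\lambda)$: on the horizontal piece both $\bar u$ and $\bar f$ vanish automatically because $g(\lambda;\cdot)\equiv 0$ on $\p I(\lambda)$, while on $\p D\times\overline{I(\lambda)}$ one has $\bar u(\lambda;\bar y)=\bar f(\lambda;\bar y)$ for every $y'$ exactly when $u(y)=f(y)$ on $\p D$, again by positivity of $g$. Combining (a) and (b) already yields the equivalence at the level of classical solutions.

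The hard part will be (c). For $\lambda>0$ it is easy: $I(\lambda)$ is a bounded interval, so $g(\lambda;\cdot)$ is bounded on it, hence $u$ is bounded on $D$ iff $\bar u$ is bounded on $\bar D(\lambda)$; and $\bar D(\lambda)=D\times I(\lambda)$ inherits Assumption~\ref{ass} from $D$, with \ref{ass}(iii) vacuous for the zero potential of \eqref{eq:laplace}, so Proposition~\ref{pro:kakutani}, applied to $\bar D(\lambda)$ with zero potential, gives that \eqref{eq:laplace} has a unique bounded solution, and the two uniqueness statements match. For $\lambda<0$ the function $g(\lambda;x')=\cosh(\sqrt{-2\lambda}\,x')$, and with it the datum $\bar f$, grows exponentially in $x'$, so no solution of \eqref{eq:laplace} is bounded in the literal sense; here ``bounded'' must be read as boundedness of $\bar u(\lambda;\bar x)/g(\lambda;x')$, i.e.\ one only asks for solutions with at most the growth of the data.

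With that reading the cleanest route is the stochastic representation. Writing $\bar W=(W,W')$ with $W'$ a one-dimensional Brownian motion independent of $W$, one has $\tau_{\bar D(\lambda)}=\tau_D$ for $\lambda<0$ (the extra coordinate never leaves $\R$), and conditioning on $W$ together with $\E^{x'}[\cosh(\sqrt{-2\lambda}\,W'_t)]=e^{-\lambda t}\cosh(\sqrt{-2\lambda}\,x')$ gives
\[
\E^{\bar x}\bigl[\bar f(\lambda;\bar W_{\tau_{\bar D(\lambda)}})\bigr]=\E^{x}\bigl[e^{-\lambda\tau_D}f(W_{\tau_D})\bigr]\,\cosh(\sqrt{-2\lambda}\,x')=u(x)\,g(\lambda;x')=\bar u(\lambda;\bar x),
\]
the finiteness of every quantity being exactly the gauge condition \ref{ass}(iii). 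A parallel computation for $\lambda>0$, using $\tau_{\bar D(\lambda)}=\tau_D\wedge\tau'$ with $\tau'$ the exit time of $W'$ from $I(\lambda)$ and the Feynman--Kac identity $\E^{x'}[\1_{\{\tau'>t\}}\cos(\sqrt{2\lambda}\,W'_t)]=e^{-\lambda t}\cos(\sqrt{2\lambda}\,x')$ (the cosine being the Dirichlet eigenfunction of $\tfrac12\p_{x'}^2$ on $I(\lambda)$ for the eigenvalue $-\lambda$), reconfirms that case and shows the probabilistic and PDE arguments agree. The one point to watch throughout is this growth issue in the Helmholtz case: it is gaugeability that simultaneously makes $u$ bounded, makes the $(n{+}1)$-dimensional representation of $\bar u$ finite, and pins $\bar u$ down uniquely within the correct growth class.
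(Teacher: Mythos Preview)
Your proposal is correct and, for the Helmholtz case $\lambda<0$, actually more explicit than the paper's own argument. The paper disposes of the Yukawa case by citation and then, for $\lambda<0$, proceeds by \emph{truncation}: restrict to $D\times[-M,M]$, where everything is bounded so Proposition~\ref{pro:kakutani} applies, and then let $M\to\infty$ using the gauge condition \ref{ass}(iii) to justify dominated convergence. You instead compute the $(n{+}1)$-dimensional stochastic representation in one shot, using the exponential-martingale identity $\E^{x'}[\cosh(\sqrt{-2\lambda}\,W'_t)]=e^{-\lambda t}\cosh(\sqrt{-2\lambda}\,x')$ together with the independence of $W$ and $W'$ and the fact that $\tau_{\bar D(\lambda)}=\tau_D$ when $I(\lambda)=\R$. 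Your route avoids the limiting step altogether and makes transparent exactly where gaugeability enters (finiteness of the right-hand side). The eigenfunction identity you invoke for $\lambda>0$ is likewise correct and gives a pleasant probabilistic reconfirmation of the cited Yukawa case.

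You also correctly flag what the paper leaves implicit: for $\lambda<0$ neither $\bar f$ nor $\bar u$ is bounded, so ``unique bounded solution'' of \eqref{eq:laplace} must be read as uniqueness within the growth class $\{\bar v:\bar v/g(\lambda;\cdot)\ \text{bounded}\}$. That reading is the only one compatible with the construction, and it is good that you name it. One small point worth adding for completeness: uniqueness of $\bar u$ within that growth class does not follow merely from the product form, since a general harmonic $\bar v$ with $\bar v/g$ bounded is not a priori separated; the clean way to close this is to note that for any such $\bar v$ the quotient $\bar v(\cdot,x')/g(\lambda;x')$, evaluated at $x'=0$, reproduces a bounded solution of \eqref{eq:helmholtz-yukawa} by your computation in part~(a), and then invoke uniqueness there via Proposition~\ref{pro:kakutani}. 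The paper's truncation argument has the same loose end, so you are in good company.
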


\begin{proof}
The Yukawa case, $\lambda>0$, was shown in \cite{panharmonic} (for general $n$) and in the original paper by Duffin \cite{duffin1} (for $n=2$).

Let us consider the Helmholtz case, $\lambda<0$. The proof that 
$$
\frac12 \Delta_{\bar x} \bar u(\lambda;\bar x) = 0
$$ 
if and only if
$$
\frac12 \Delta_x u(x) -\lambda u(x) = 0
$$
is straightforward and can be done exactly as in the Yukawa case.  Also, it is straightforward to see that $D$ satisfies assumptions \ref{ass}(i) and \ref{ass}(ii) if and only if $\bar D(\lambda)$ satisfies assumptions \ref{ass}(i) and \ref{ass}(ii).  The essential difference to the Yukawa case is that now $\bar D(\lambda)$ is unbounded in the $(n+1)^\mathrm{th}$ co-ordinate.  Consequently, the boundary data $\bar f(\lambda;\cdot)$ is not bounded.  This is where we need the gauge condition \ref{ass}(iii).  Indeed, we can approximate the solution in $D\times [-M,M]$, and the result follows from the stochastic representation \ref{pro:kakutani} and the dominated convergence theorem by letting $M\to\infty$.
\end{proof}

Let $\bar W=(W,W')$ be an $(n+1)$-dimensional Brownian motion and let
$$
\tau_{I(\lambda)} =\inf\{ t>0\,;\, W'_t \not\in I(\lambda)\}.
$$
Then, by using the Duffin correspondence Theorem \ref{thm:duffin}, we obtain a stochastic representation for the solution of the boundary value problem \eqref{eq:helmholtz-yukawa} in terms of the exit location distribution only, i.e., a stochastic solution that is independent of the exit time distribution.

\begin{cor}[Stochastic representation without exit time]\label{cor:kakutani}
The boundary value problem \eqref{eq:helmholtz-yukawa} admits a solution
$$
u(x) = \E^{x,0}\left[f\left(W_{\tau_D}\right)g\left(\lambda;W'_{\tau_D}\right)\,;\, \tau_{I(\lambda)}>\tau_D\right].
$$
\end{cor}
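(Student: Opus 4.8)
The plan is to combine the Duffin correspondence (Theorem~\ref{thm:duffin}) with the classical stochastic (Kakutani) representation for harmonic functions, i.e. Proposition~\ref{pro:kakutani} specialised to $\lambda=0$, read on the extended domain $\bar D(\lambda)$. First I would invoke Theorem~\ref{thm:duffin} to conclude that $\bar u(\lambda;\bar x)=u(x)\,g(\lambda;x')$ solves the Laplace problem \eqref{eq:laplace}. Applying the $\lambda=0$ representation to this $(n+1)$-dimensional harmonic function yields
\[
\bar u(\lambda;\bar x)=\E^{\bar x}\!\left[\bar f\!\left(\lambda;\bar W_{\tau_{\bar D(\lambda)}}\right)\right],
\]
where $\tau_{\bar D(\lambda)}$ is the first exit time of $\bar W=(W,W')$ from $\bar D(\lambda)=D\times I(\lambda)$. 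Since the domain is a product and the coordinate blocks $W$ and $W'$ are independent, the exit time factorises, $\tau_{\bar D(\lambda)}=\tau_D\wedge\tau_{I(\lambda)}$, and the exit position is $\bar W_{\tau_{\bar D(\lambda)}}=\bigl(W_{\tau_D\wedge\tau_{I(\lambda)}},W'_{\tau_D\wedge\tau_{I(\lambda)}}\bigr)$: on $\{\tau_{I(\lambda)}>\tau_D\}$ the particle leaves through the lateral boundary $\partial D\times I(\lambda)$, and on the complementary event it leaves through a ``cap'' $D\times\partial I(\lambda)$ (the corner set $\partial D\times\partial I(\lambda)$ being hit with probability zero).

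Next I would evaluate at the slice $\bar x=(x,0)$. Since $g(\lambda;0)=\cos 0=\cosh 0=1$, the left-hand side is exactly $u(x)$, so it only remains to discard the contribution of the cap event $\{\tau_{I(\lambda)}<\tau_D\}$ on the right. For $\lambda>0$ a cap exit forces $W'_{\tau_{\bar D(\lambda)}}\in\{\pm\pi/(2\sqrt{2\lambda})\}$, hence $g\bigl(\lambda;W'_{\tau_{\bar D(\lambda)}}\bigr)=\cos(\pm\pi/2)=0$; thus $\bar f\bigl(\lambda;\bar W_{\tau_{\bar D(\lambda)}}\bigr)=0$ there and the cap term vanishes, leaving precisely the asserted identity. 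For $\lambda<0$ we have $I(\lambda)=\R$, so $\tau_{I(\lambda)}=+\infty$ almost surely and, using $\tau_D<\infty$ a.s.\ from Assumption~\ref{ass}(ii), the event $\{\tau_{I(\lambda)}>\tau_D\}$ has full probability; the conditioning is then vacuous and the formula again follows.

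The one genuinely delicate point — and the step I expect to be the main obstacle — is the Helmholtz case $\lambda<0$, where $\bar D(\lambda)$ is unbounded in the last coordinate and the boundary datum grows like $\cosh(\sqrt{-2\lambda}\,y')$, so Proposition~\ref{pro:kakutani} does not literally apply on $\bar D(\lambda)$. I would handle this exactly as in the proof of Theorem~\ref{thm:duffin}: solve first on the truncated domain $D\times[-M,M]$, use the gaugeability bound of Assumption~\ref{ass}(iii) to produce an integrable dominating function for $e^{-\lambda\tau_D}f(W_{\tau_D})$, and pass to the limit $M\to\infty$ by dominated convergence.

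Finally, I note that the corollary can also be obtained directly, bypassing Theorem~\ref{thm:duffin}: starting from $u(x)=\E^x[e^{-\lambda\tau_D}f(W_{\tau_D})]$ (Proposition~\ref{pro:kakutani}), one observes that $x'\mapsto g(\lambda;x')$ is an eigenfunction of $\frac12\partial_{x'}^2$ with eigenvalue $-\lambda$ that vanishes on $\partial I(\lambda)$, so $(t,x')\mapsto e^{-\lambda t}g(\lambda;x')$ solves the heat equation on $I(\lambda)$ killed at the boundary; hence $\E^{0}\bigl[g(\lambda;W'_t)\,;\,\tau_{I(\lambda)}>t\bigr]=e^{-\lambda t}$ for every fixed $t$. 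Conditioning on the $\sigma$-algebra generated by $W$, using the independence of $W$ and $W'$ together with the measurability of $\tau_D$ and $f(W_{\tau_D})$ with respect to it, then replaces $e^{-\lambda\tau_D}$ by $g(\lambda;W'_{\tau_D})\,\1_{\tau_{I(\lambda)}>\tau_D}$ inside the expectation, which is exactly the claimed representation.
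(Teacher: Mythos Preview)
Your proposal is correct and follows exactly the approach the paper intends: the corollary is stated without proof, the text immediately preceding it simply saying that ``by using the Duffin correspondence Theorem~\ref{thm:duffin}, we obtain a stochastic representation \ldots\ in terms of the exit location distribution only,'' i.e.\ Theorem~\ref{thm:duffin} plus Proposition~\ref{pro:kakutani} with $\lambda=0$ on $\bar D(\lambda)$, which is precisely your main line of argument (including the same truncation-and-dominated-convergence manoeuvre for the Helmholtz case already used in the proof of Theorem~\ref{thm:duffin}).

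Your final paragraph, however, goes beyond the paper: the direct derivation via the eigenfunction identity $\E^{0}\bigl[g(\lambda;W'_t)\,;\,\tau_{I(\lambda)}>t\bigr]=e^{-\lambda t}$ and conditioning on $\sigma(W)$ is a genuinely different and self-contained route that bypasses Theorem~\ref{thm:duffin} altogether. It is arguably cleaner, since it reduces the corollary to a one-dimensional martingale/heat-kernel computation rather than to the full harmonic-extension machinery; the paper's route, on the other hand, makes the geometric picture (harmonic lift to $\bar D(\lambda)$) explicit and yields Theorem~\ref{thm:duffin} as a result of independent interest.
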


\section{Walk on spheres algorithms}\label{sect:wos-algorithms}

If one wants to simulate the solutions of \eqref{eq:helmholtz-yukawa} directly by using the stochastic representation of Proposition \ref{pro:kakutani}, then one must simulate both the exit time and the exit position of the Brownian particle moving in the domain $D$. This can be done e.g. by using a weighted walk on spheres (WWOS), or --- in the Yukawa setting --- killing walk on spheres (KWOS).  (It is also possible to use the recent walk on moving spheres (WOMS) algorithm developed in \cite{woms,woms2}.)  
However, by using the Duffin correspondence of Theorem \ref{thm:duffin} one only needs to simulate the exit position of the Brownian particle in the extended domain $\bar D(\lambda)$.  This allows one to use the classical Muller's \cite{muller} walk on spheres (WOS) algorithm. We call this extension the Duffin Walk On Spheres (DWOS) algorithm.

In this section, we will explain in detail these walk on spheres algorithms --- DWOS, WWOS, KWOS --- for the Yukawa--Laplace--Helmholtz equations.  Next, in Section \ref{sect:comparisons} we will provide examples and comparisons of these algorithms. 

We start with the most elementary problem: how to generate uniform random variables on spheres.

\subsection*{Generating uniform distribution on spheres}

In all the variants of the walk on spheres algorithms presented here, one needs to simulate random variables that are uniformly distributed on the $(n-1)$-dimensional unit spheres $\partial B(0,1) = \partial B_n(0,1)$.  Most mathematical software has this functionality built in.  In case your favorite software does not have this functionality, Algorithm \ref{alg:uniform-sphere} below explains how to generate uniform distribution on the $(n-1)$-dimensional unit spheres by using independent normally distributed random variables. Generating normally distributed random variables should be built-in in almost all mathematical software.

\begin{alg}[Uniform distribution on the unit sphere]\label{alg:uniform-sphere}
Generate $X_1,\ldots,X_n$ independent standard normal random variables. Set
$S^2=X_1^2+\ldots+X_n^2$. Then $U = (X_1/S, \ldots, X_n/S)$ is uniformly distributed on the $(n-1)$-dimensional unit sphere.  
\end{alg}

\subsection*{Duffin Walk On Spheres}

For the DWOS Algorithm \ref{alg:dwos} below, we use the stochastic representation of Corollary \ref{cor:kakutani}.  

Recall that $\bar W = (W,W')$ is the $(n+1)$-dimensional Brownian motion, $\P^{\bar x}$ and $\E^{\bar x}$ are the probability law and expectation, respectively, under which $\bar W_0 = \bar x$. Under the probability law $\P^{\bar x}$, denote
\begin{eqnarray*}
\tau_x &=& \inf\left\{ t>0\,;\, W_t\not\in D\right\}, \\
\tau'_{x'} &=& \inf\left\{ t>0\,;\, W'_t \not\in I(\lambda)\right\}, \\
\bar\tau_{\bar x} &=& \inf\left\{ t>0\,;\, \bar W \not\in \bar D(\lambda)\right\}.
\end{eqnarray*}

\begin{rem}\label{rem:exit-first}
$\tau_x$ and $\tau'_{x'}$ are independent, and
$
\bar \tau_{\bar x} = \min\left(\tau_x,\tau'_{x'}\right).
$
\end{rem}

For $x\in D$, denote $\bar x = (x',0)\in \bar D(\lambda)$. 

The stochastic approximation for $u(x)$ is 
\begin{eqnarray}\label{eq:u-wos}
\hat u_K(x) &=& 
\frac{1}{K} \sum_{k=1}^K \bar f\left(\lambda\,;\, \bar w^k_{\bar x}\big(\bar \tau_{\bar x}^k\big)\right) \\
&=&
\frac{1}{K}\sum_{k=1}^K f\left(w_x^k(\bar\tau^k_{\bar x})\right) g\left(\lambda\,;\, (w')^k_0(\bar\tau^k_{\bar x})\right). \nonumber
\end{eqnarray}
Here$\bar\tau^k_x$ is the termination-step of the trajectory of the each individual Brownian particle $\bar w_{\bar x}^k$ starting from $\bar x=(x,0)\in \bar D(\lambda)=D\times I(\lambda)$: 
\begin{eqnarray*}
\bar w_{\bar x}^k
&=&
\left( \bar w_{\bar x}^k(j)\right)_{j=0}^{\bar\tau_{\bar x}^k} \\
&=& \left(w_x^k,(w'_0)^k\right)_{j=0}^{\bar\tau_{\bar x}^k}  \\
&=& \left(\big(x,0\big),\big(w_x^k(1),(w'_0)^k(1)\big),\ldots,
\big(w_x^k(\bar\tau^k_{\bar x}),(w'_0)^k(\bar \tau^k_{\bar x})\big)\right).
\end{eqnarray*}

For each $k=1,\ldots, K$, and $\bar x=(x,0)$, $x\in D$, the individual particle exit locations $\bar w_{\bar x}(\bar\tau^k_{\bar x})$ for the approximating sum \eqref{eq:u-wos} are generated by Duffin Walk On Spheres (DWOS) Algorithm \ref{alg:dwos} below:

\begin{alg}[DWOS]\label{alg:dwos}
Fix a small parameter $\varepsilon>0$.
\begin{enumerate}
\item
{Initialize}:  $\bar w_{\bar x}(0)=(w_x(0), (w')_0(0)) = (x,0)$. 
\item
{While} $\dist(\bar w_{\bar x}(j), \partial \bar D(\lambda)) > \varepsilon$:
\begin{enumerate}
\item
Set $r(j) = \dist(\bar w_{\bar x}(j),\partial \bar D(\lambda))$.
\item
Sample $\xi(j)$ independently from the unit sphere $\partial B_{n+1}(0,1)$
(by using Algorithm \ref{alg:uniform-sphere}). 
\item
Set $\bar w_{\bar x}(j+1) = \bar w_{\bar x}(j) + r(j)\xi(j)$.
\end{enumerate} 
\item
{When} $\dist(\bar w_{\bar x}(j), \partial \bar D(\lambda)) \le \varepsilon$:
\item
Set $\mathrm{pr}\, \bar w_{\bar x}(j)$ to be the orthogonal projection of $\bar w_{\bar x}(j)$ to $\partial\bar D(\lambda)$.
\item
{Return} $\mathrm{pr}\, \bar w_{\bar x}(j)$.
\end{enumerate}
\end{alg}

\subsection*{Weighted Walk On Spheres}

Suppose we want use the WOS algorithm directly without the Duffin correspondence.  To do this, we must estimate the term $e^{-\lambda\tau_D}$ in Proposition \ref{pro:kakutani}.
Suppose the WOS algorithm takes $T$ steps to hit the boundary with balls of radii $r_1, r_2,\ldots, r_T$. The weighted walk on spheres (WWOS) algorithm \ref{alg:wwos} below is based on the fact that the term $e^{-\lambda\tau_D}$ can be decomposed into independent terms
$$
e^{-\lambda\tau_D} = e^{-\lambda\tau_{r_1}}e^{-\lambda\tau_{r_2}}\cdots e^{-\lambda\tau_{r_T}},
$$
where the $\tau_{r_j}$'s are the exit times of the Brownian motion from balls of radius $r_j$, and these exit times are also independent of the exit locations from the ball.  Consequently, at each step $j$ of the WOS algorithm, the Brownian particle gains (or loses) an independent multiplicative weight that is given by
$$
\E\left[e^{-\lambda\tau_{r_j}}\right].
$$ 
Here $\E=\E^0$.

By using the $1/2$-self-similarity of the Brownian motion we see that 
\begin{equation}\label{eq:def-psi}
\E\left[e^{-\lambda\tau_{r}}\right] =
\E\left[e^{-\lambda r^2 \tau_{1}}\right] 
= \psi(\lambda r^2).
\end{equation}

For $\mu>0$ the function $\psi$ is well-known, cf. Wendel \cite{wendel}:
\begin{equation}\label{eq:psi-pos}
\psi(\mu) =
\left\{\begin{array}{rl}
\frac{\mu^\nu}{2^\nu\Gamma(\nu+1)I_\nu(\mu)}, & n=2\nu-2\ge 2, \\
\frac{1}{\cosh(\sqrt{2\lambda})}, & n=1.
\end{array}\right.
\end{equation}
Here $I_\nu$ is the modified Bessel function
$$
I_\nu(x) = \sum_{i=0}^\infty \frac{1}{i!\Gamma(i+\nu+1)}
\left(\frac{x}{2}\right)^{2i+\nu}.
$$

For $\mu<0$, as far as we know, no simple formula for $\psi(\mu)$ is known. However, the distribution function of $\tau_1$ is well-known, cf. Kent \cite{kent} or Ciesielski and Taylor \cite{ciesielski-taylor}:
\begin{equation}\label{eq:hitting-distribution}
\P\left[\tau_1\le t\right] = 1 -
\frac{1}{2^{\nu-1}\Gamma(\nu+1)}\sum_{i=1}^{\infty}
\frac{j_{\nu,i}^{\nu-1}}{J_{\nu+1}(j_{\nu,i})}\exp\left\{-\frac12 j_{\nu,i}^2 \,t\right\}.
\end{equation}
Here $\P=\P^0$, $\nu=n/2-1$, and $j_{\nu,i}$'s are the positive zeros of the Bessel function $J_\nu$ given by \eqref{eq:bessel} in the increasing order.
Consequently, by simple substitution to \eqref{eq:hitting-distribution} combined with change of differentiation and summation,
\begin{eqnarray*} 
\lefteqn{\psi(\mu)} \\
&=&
-\int_0^\infty e^{-\mu t}\,\d\left[\frac{1}{2^{\nu-1}\Gamma(\nu+1)}\sum_{i=1}^{\infty}
\frac{j_{\nu,i}^{\nu-1}}{J_{\nu+1}(j_{\nu,i})}
\exp\left\{-\frac{1}{2} j_{\nu,i}^2\, t\right\}
\right] 
\nonumber \\ 
&=&
\frac{-1}{2^{\nu-1}\Gamma(\nu+1)}\sum_{i=1}^{\infty}
\frac{j_{\nu,i}^{\nu-1}}{J_{\nu+1}(j_{\nu,i})}\int_0^\infty e^{-\mu t}\,\d\left[
\exp\left\{-\frac{1}{2} j_{\nu,i}^2\, t\right\}
\right] \nonumber \\
&=&
\frac{1}{2^{\nu}\Gamma(\nu+1)}\sum_{i=1}^{\infty}
\frac{j_{\nu,i}^{\nu+1}}{J_{\nu+1}(j_{\nu,i})}\int_0^\infty \exp\left\{-\left(\mu+\frac{1}{2} j_{\nu,i}^2\right) t\right\}
\, \d t.
\label{eq:psi}
\end{eqnarray*}
By the gauge condition $\mu>-1/2 j_{\nu,1}$. Therefore,
\begin{eqnarray*}
\lefteqn{\psi(\mu)} \\
&=&
\frac{1}{2^{\nu}\Gamma(\nu+1)}\sum_{i=1}^{\infty}
\frac{j_{\nu,i}^{\nu+1}}{J_{\nu+1}(j_{\nu,i})}\int_0^\infty \exp\left\{-\left(\mu+\frac{1}{2} j_{\nu,i}^2\right) t\right\}
\, \d t \\
&=&
\frac{-1}{2^{\nu}\Gamma(\nu+1)}\sum_{i=1}^{\infty}
\frac{j_{\nu,i}^{\nu+1}}{J_{\nu+1}(j_{\nu,i})\left(\mu+\frac12 j_{\nu,i}^2\right)}\left[
\exp\left\{-\left(\mu+\frac{1}{2} j_{\nu,i}^2\right) t\right\}
\right]_{t=0}^{\infty} \\
&=& 
\frac{1}{2^{\nu}\Gamma(\nu+1)}\sum_{i=1}^{\infty}
\frac{j_{\nu,i}^{\nu+1}}{J_{\nu+1}(j_{\nu,i})\left(\mu+\frac12 j_{\nu,i}^2\right)}.
\end{eqnarray*} 
Consequently, we have obtained
\begin{equation}\label{eq:psi-neg}
\psi(\mu) = 
\frac{1}{2^{\nu}\Gamma(\nu+1)}\sum_{i=1}^{\infty}
\frac{j_{\nu,i}^{\nu+1}}{J_{\nu+1}(j_{\nu,i})\left(\mu+\frac12 j_{\nu,i}^2\right)},
\end{equation} 
a formula that is true for all $\mu>-\frac12 j_{\nu,1}^2$, i.e., whenever the gauge condition \ref{ass}(iii) for the unit ball holds.

\begin{rem}
While it is possible to directly use the formula \eqref{eq:psi-neg} in numerical computations, our experimental results show that this is very inefficient as the sum in the formula \eqref{eq:psi-neg} converges very slowly and several thousand terms are needed for workable accuracy. There are at least two reasonable approaches to address this shortcoming: 
\begin{enumerate}
\item 
One may use the formula \eqref{eq:hitting-distribution} to tabulate values of the function $\P\left[\tau_1\le t\right]$ for given values of $t_j\in(0,\infty)$, where $t_j<t_{j+1}$ and $j=1,2,\ldots,N$, and then use the identity \eqref{eq:def-psi} and the elementary properties of the expected value to obtain the approximation
$$
\psi(\lambda r^2) \approx \sum_{j=1}^{N-1} e^{-\lambda r^2(t_j+t_{j+1})/2}\left(\P\left[\tau_1\le t_{j+1}\right]-\P\left[\tau_1\le t_j\right]\right).
$$
\item 
One may use the formula \eqref{eq:psi-neg} to tabulate values of the function $\psi(\lambda r^2)$ for various values of $r>0$, and then use the table and appropriate interpolation to approximate the function $\psi(\lambda r^2)$ for an arbitrary value of $r$.
\end{enumerate}
The first approach was used in examples of Section \ref{sect:comparisons}. Approximations for function $\psi(\lambda r^2)$, for various values of $\lambda, r$ and $n$ are illustrated in Figure \ref{fig:psifun} (cf. the figure in \cite{panharmonic} for an illustration of $\psi$ for positive values of $\lambda$).
\end{rem}

\begin{figure}[h]

\includegraphics[width=5.5cm]{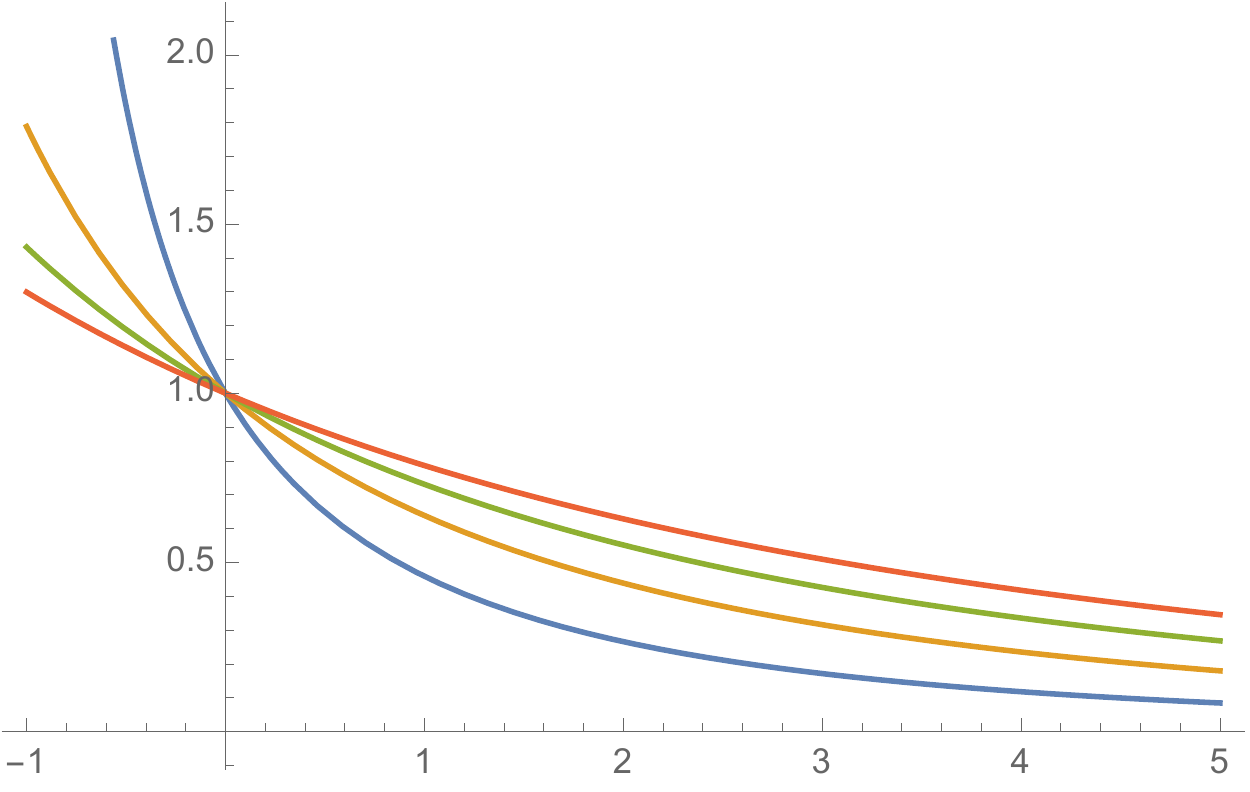}\qquad
\includegraphics[width=5.5cm]{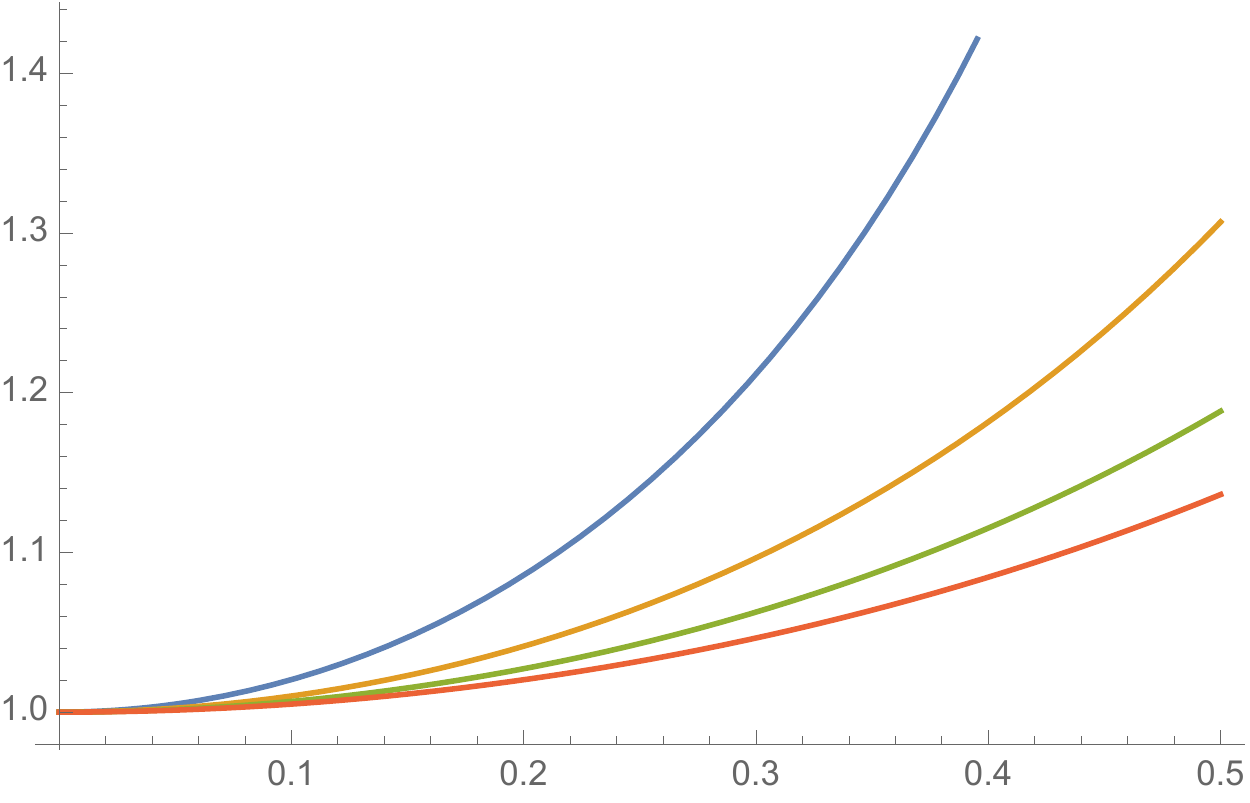}

\caption{ The function $\psi(\lambda)$ for $\lambda\in (-1,5)$ for $n=1,2,3,4$, where $n=1$ on the top (left). The function $\psi(-2r^2)$, $r>0$, where the dimension are $n=1,2,3,4$, where $n=1$ on the top (right). }\label{fig:psifun}
\end{figure}

\begin{rem}
Note that \eqref{eq:psi} is the extension of the $\psi$-function, that appears in \cite{panharmonic}, but with different parametrization.
\end{rem}

Now we are ready to give the Weighted Walk On Spheres (WWOS) algorithm:
The approximation for $u(x)$ is, by Proposition \ref{pro:kakutani}, 
\begin{equation}\label{eq:u-wwos}
\hat u_K(x) = \frac{1}{K}\sum_{k=1}^K c_x^k(\lambda) f\left(w_x^k(\tau^k_x)\right).
\end{equation}
Here$\tau^k_x$ is the exit time for the each individual particle and
$c_x^k(\lambda) = e^{-\lambda\tau^k_x}$.

The individual particle exit locations $w_{x}(\tau_x)$ and weights $c_x^k$ are generated by Algorithm \ref{alg:dwos} below:

\begin{alg}[WWOS]\label{alg:wwos}
Fix a small parameter $\varepsilon>0$.
\begin{enumerate}
\item
Initialize:  $w_x(0)=x$, $c_x(0)=0$ 
\item
While $\dist(w_{x}(j), D) > \varepsilon$:
\begin{enumerate}
\item
Set $r(j) = \dist(w_{x}(j),\partial D)$.
\item
Sample $\xi(j)$ independently from the unit sphere $\partial B_{n}(0,1)$
(by using Algorithm \ref{alg:uniform-sphere}). 
\item
Set $w_{x}(j+1) = w_{x}(j) + r(j)\xi(j)$ and
$c_x(j+1)=c_x(j)\psi(\lambda r^2)$. Here $\psi$ is given by \eqref{eq:psi-pos} for $\lambda>0$ and \eqref{eq:psi-neg} for $\lambda<0$.
\end{enumerate} 
\item
When $\dist(w_{x}(j), \partial D) \le \varepsilon$:
\item
Set $\mathrm{pr}\, w_{x}(j)$ to be the orthogonal projection of $w_{x}(j)$ to $\partial D$.
\item
Return $\mathrm{pr}\, w_{x}(j)$ and $c_x(j)$.
\end{enumerate}
\end{alg}

\subsection*{Killing Walk On Spheres}

For the Yukawa case $\lambda>0$, the weight loss $e^{-\lambda \tau_D}$ of the particle can be interpreted as independent exponential killing of the particle.  See \cite{elepov-mikhailov}, \cite{panharmonic} or \cite{yrs} for details.
Consequently, the WWOS algorithm \ref{alg:wwos} can be reinterpreted as Killing Walk On Spheres (KWOS).

Our estimator for $u(x)$ is
\begin{equation}\label{eq:simu-killing-kakutani}
\hat u_K (x) = \frac{1}{K}\sum_{k\in K^*(\lambda)} f(w_x^k(\tau^k_x)), 
\end{equation}
where $w_x^k$, $k=1,\ldots, K$ are independent simulations of the trajectories Brownian particles starting from point $x$, and the set $K^*(\lambda)\subset\{1,\ldots,K\}$ contains the particles that are not killed; $\tau^k_x$ is the termination-step time of the algorithm.  The individual particles are generated by Algorithm \ref{alg:kwos} below.

\begin{alg}[KWOS]\label{alg:kwos}
Fix a small parameter $\varepsilon>0$.
\begin{enumerate}
\item
Initialize:  $w_x(0)=x$. 
\item
While $\dist(w_{x}(j), D) > \varepsilon$:
\begin{enumerate}
\item
Set $r(j) = \dist(w_{x}(j),\partial D)$.
\item
Kill the particle with probability $1-\psi(\lambda r(j)^2)$.
If the particle is killed, the algorithm terminates and returns $0$.
\item
Sample $\xi(j)$ independently from the unit sphere $\partial B_{n}(0,1)$
(by using Algorithm \ref{alg:uniform-sphere}). 
\item
Set $w_{x}(j+1) = w_{x}(j) + r(j)\xi(j)$.
\end{enumerate} 
\item
When $\dist(w_{x}(j), \partial D) \le \varepsilon$:
\item
Set $\mathrm{pr}\, w_{x}(j)$ to be the orthogonal projection of $w_{x}(j)$ to $\partial D$.
\item
Return $\mathrm{pr}\, w_{x}(j)$.
\end{enumerate}
\end{alg}

\section{Examples and comparisons}\label{sect:comparisons}

In this section, we give examples to motivate our algorithms and to illustrate their potential applications. The examples were computed by using a straightforward implementation algorithms in one and two-dimensional settings and chosen from the point of view of visualization. All computations were performed on a MacBook Air laptop. Wolfram Mathematica 10.2 and basic implementations of the algorithms with no performance optimizations were used. 

Obviously, the stochastic approaches presented here are more attractive in higher dimensions, where many deterministic simulation methods  are not available, or lead into excessive computation times. Also, it should be noted that the algorithms in this paper are particularly suitable for parallel computation, as simulated paths are independent from each other.

\subsection*{One-dimensional example}
Let $D=[0,1]$ and consider the differential equation $u''(t)=-4u(t)$ with boundary values $u(0)=1$ and $u(1)=3$. Then the exact solution to the boundary value problem is given by
\begin{equation}
u(t)=\cos(2t)-\cot(2)\sin(2t)+3\csc(2)\sin(2t).
\end{equation}
The exact solution $u$ as well as its approximations with the DWOS algorithm, where the problem is first lifted into dimension two by using the Duffin correspondence, and with the WWOS algorithm are illustrated in Figure \ref{fig:onedim}.

\begin{figure}[h]

\includegraphics[width=5.5cm]{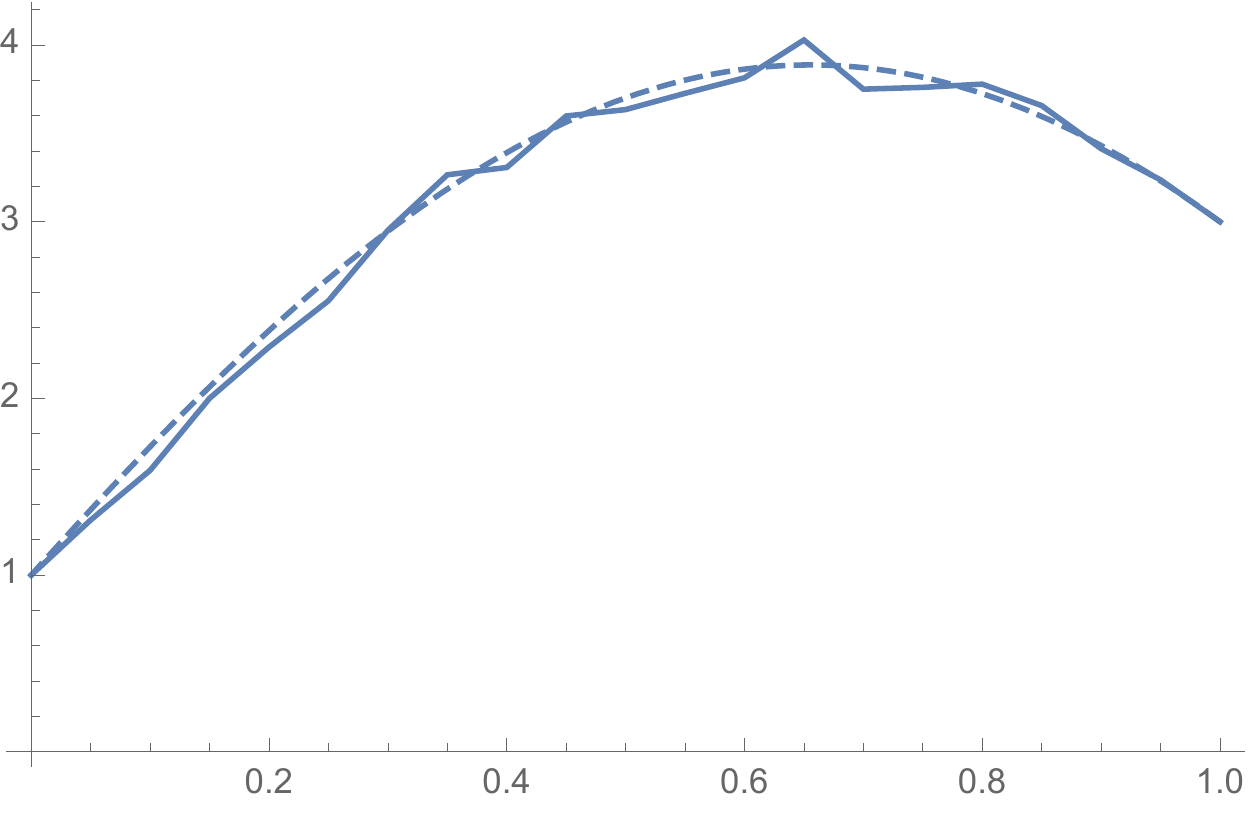}\qquad
\includegraphics[width=5.5cm]{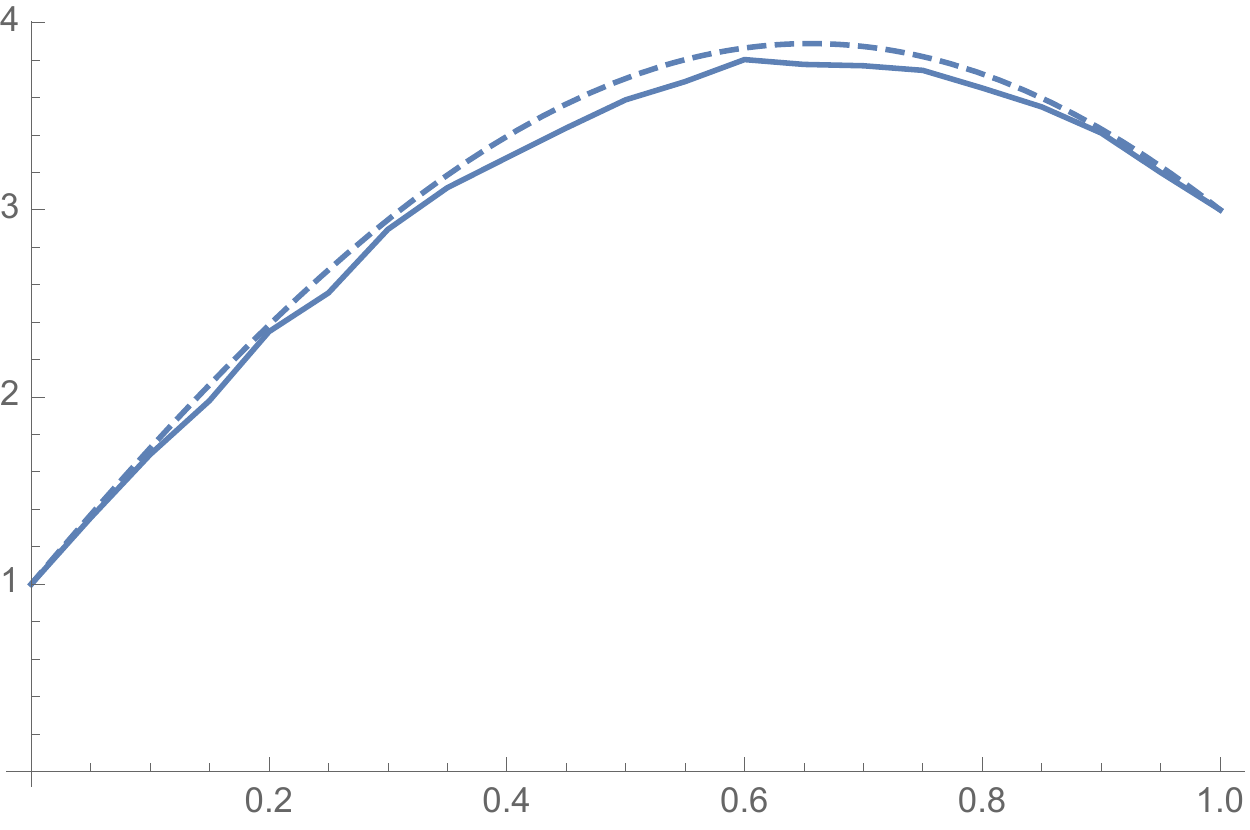}

\caption{ The exact solution $u$ (dashed) and its approximations $\hat u_{10\,000}$ with DWOS (left) and WWOS (right) algorithms. }\label{fig:onedim}
\end{figure}

\subsection*{Two-dimensional examples}
Next, we consider two simple boundary value problems of the equation \eqref{eq:helmholtz-yukawa} on polygonal domains in the plane:
\begin{enumerate}
\item The trapezoidal domain $D_1$ defined by the points $(0,0), (1,0), (1,1)$ and $(0,2)$, where the boundary values are given by $f_1(x,y)=x^3+x^2$ and with $\lambda=-2$ (cf. Example 4.4 and Figure 2 of \cite{yrs}).
\item The non-convex L-shaped domain $D_2$ defined by the points $(0,0), (2,0), (2,1), (1,1), (1,2)$ and $(0,2)$, where boundary values are given by the non-continuous function $f_2(x,y)=\1_{\{(x,y) : x,y\neq 0\}}$ and $\lambda =-1$.
\end{enumerate}
We compute an approximation $\hat u$ to the solution of the above boundary value problems  by using both the DWOS algorithm and the WWOS  algorithm. For DWOS, the problem is lifted to the dimension three.  The DWOS and WWOS on the domain $D_2$ are illustrated in Figure \ref{fig:wos23dimL}. Approximations to the solutions are illustrated in Figure \ref{fig:sol-trape} and Figure \ref{fig:sol-L}. Our experiments suggest that WWOS algorithm is more stable than DWOS and thus a smaller number of simulations are required for a comparable result.

\begin{figure}[h]

\includegraphics[width=5.5cm]{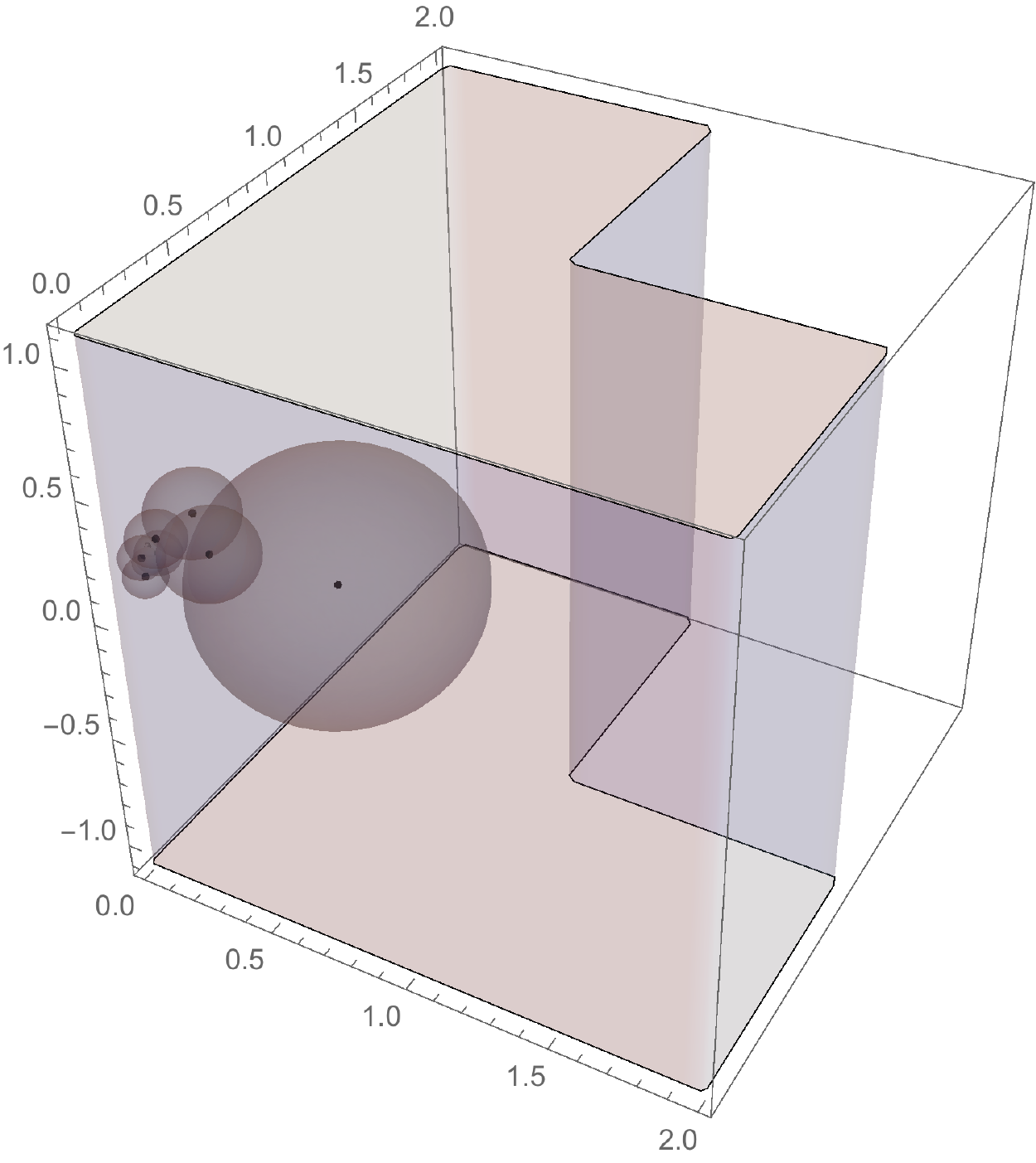}\qquad
\includegraphics[width=5.5cm]{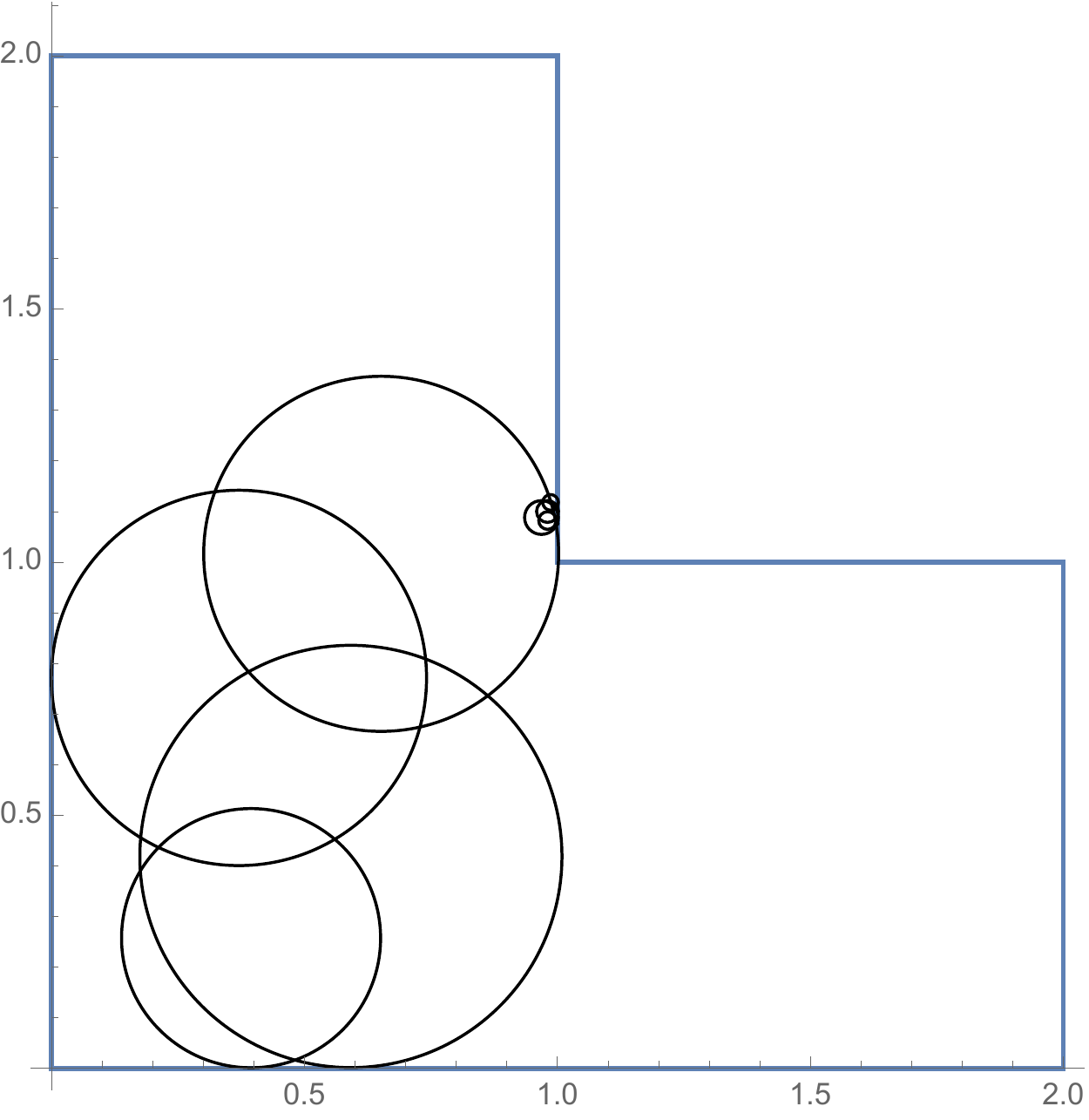}

\caption{Simulation of an individual path on the domain $D_2$ with DWOS (left) and WWOS (right) algorithms. In DWOS, the problem is first lifted into the dimension three by using the Duffin correspondence. The WWOS algorithm is purely two-dimensional, but it requires computation of the weight function $\psi$. }\label{fig:wos23dimL}
\end{figure}

\begin{figure}[h]

\includegraphics[width=5.8cm]{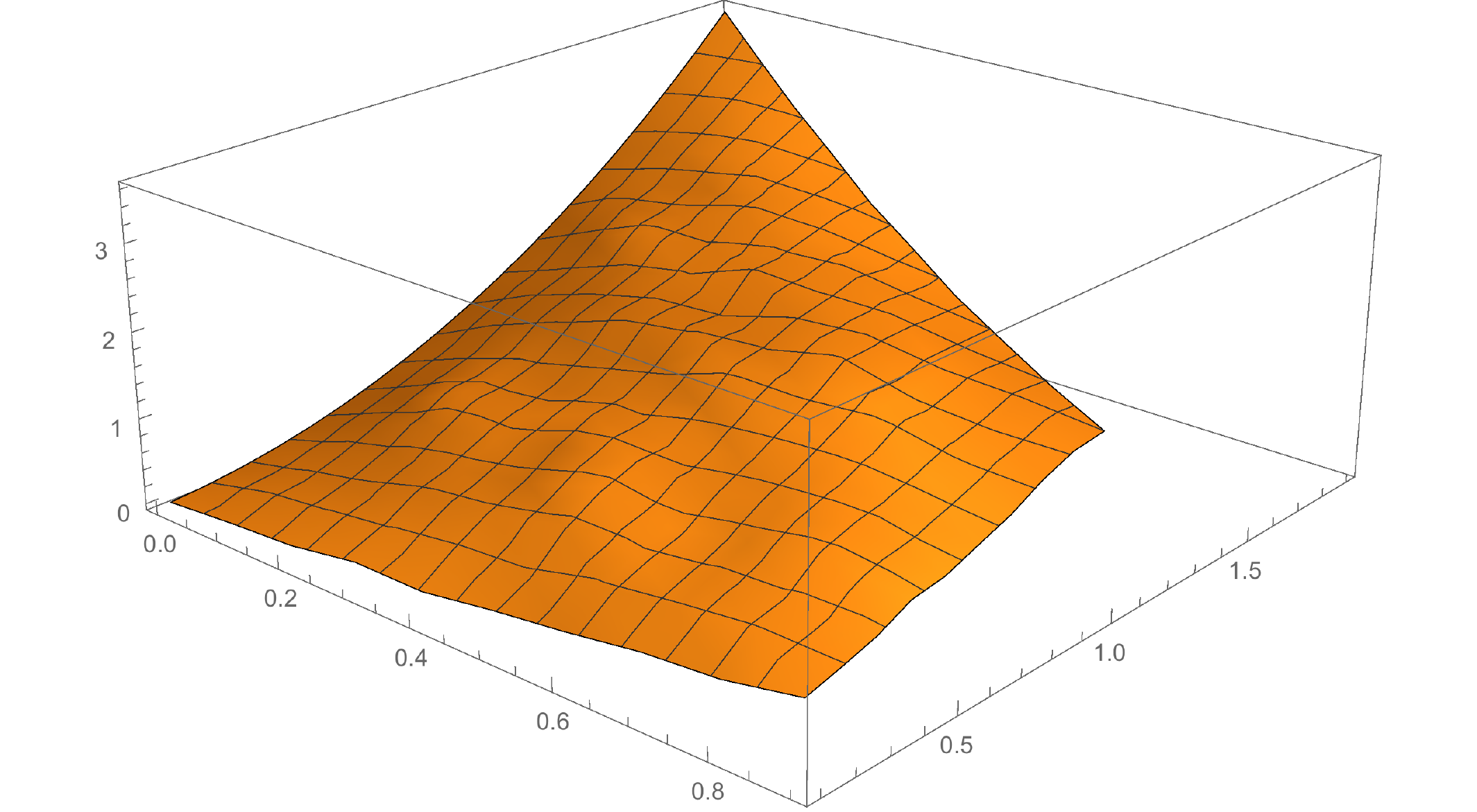}\qquad
\includegraphics[width=5.2cm]{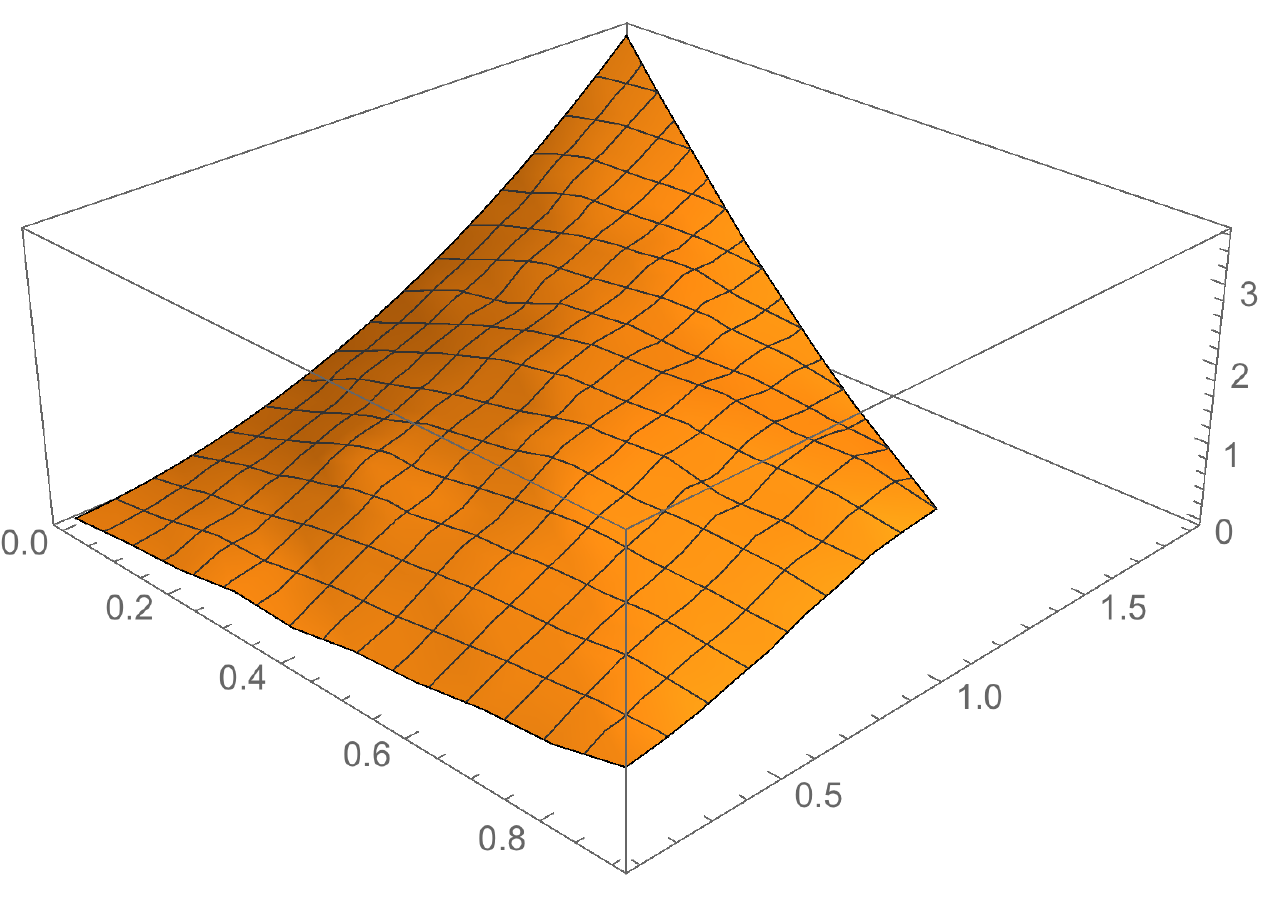}

\caption{Simulated approximations $\hat u_{1000}$ (DWOS, left) and $\hat u_{500}$ (WWOS, right) and  of the solution on the domain $D_1$. The boundary values are given by the function $f_1$ and $\lambda=-2$. }\label{fig:sol-trape}
\end{figure}

\begin{figure}[h]

\includegraphics[width=5.8cm]{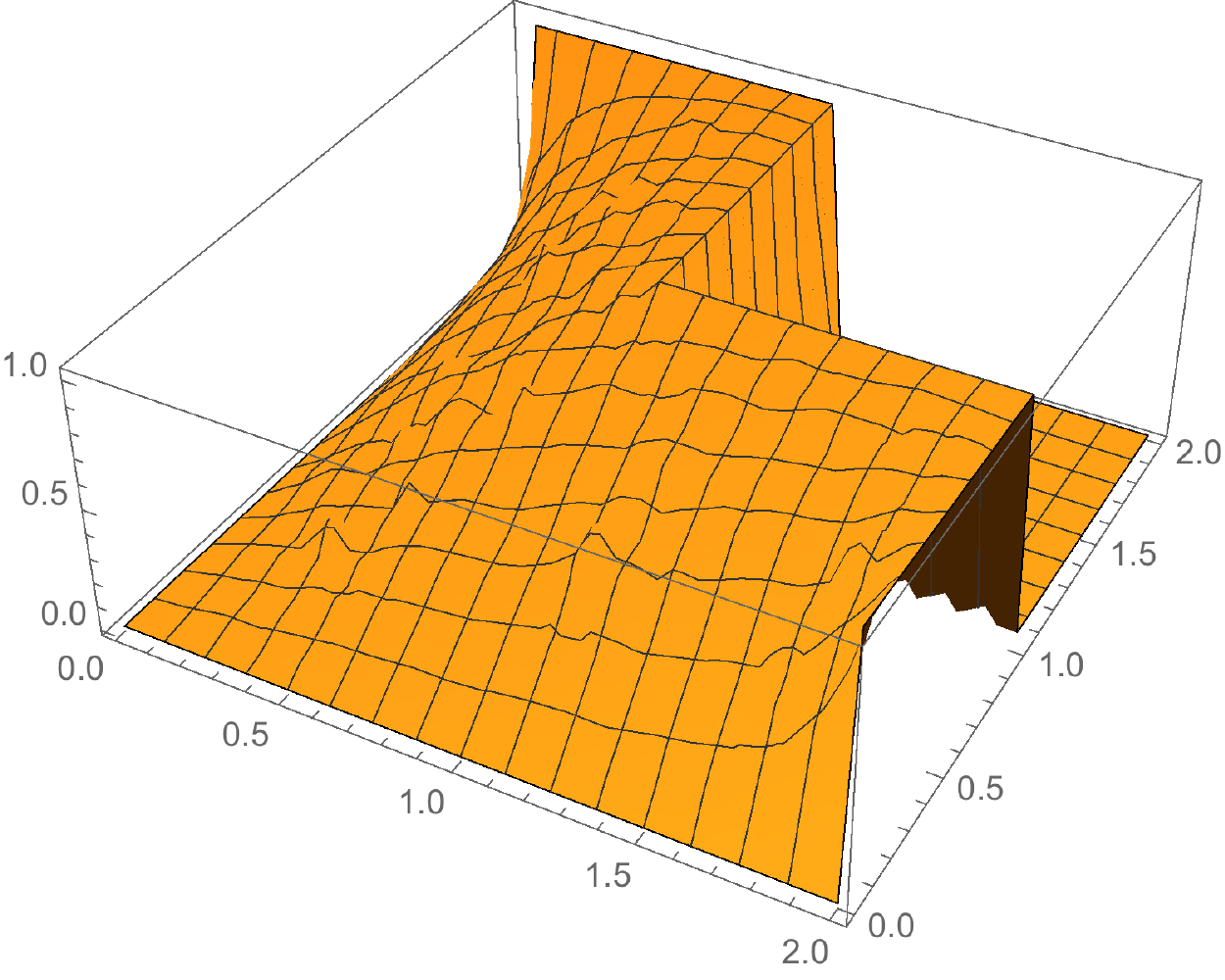}\qquad
\includegraphics[width=5.2cm]{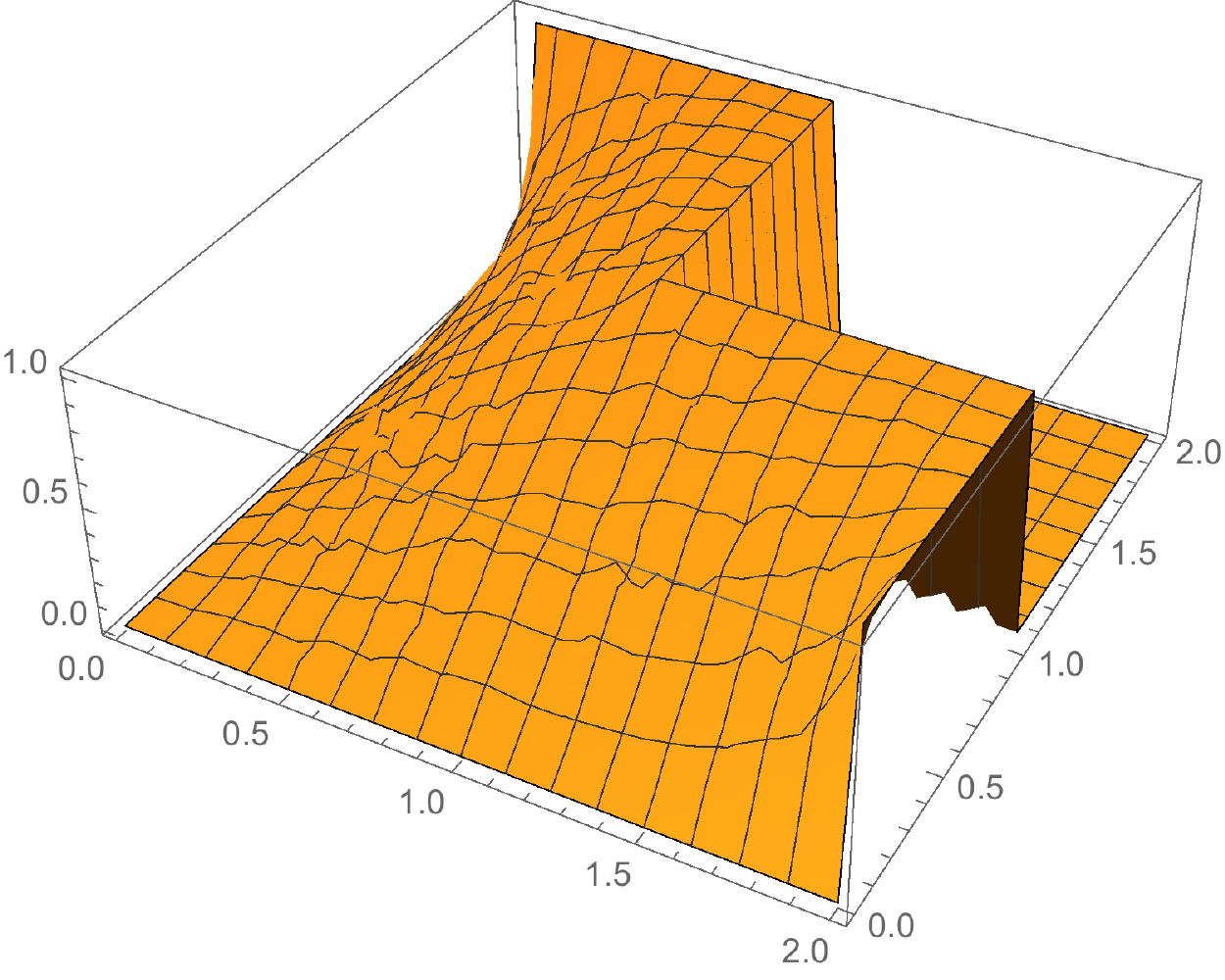}

\caption{Simulated approximations $\hat u_{1000}$ (DWOS, left) and $\hat u_{500}$ (WWOS, right) and  of the solution on the domain $D_2$. The boundary values are given by the function $f_2$ and $\lambda=-1$. }\label{fig:sol-L}
\end{figure}

\section{Conclusion}\label{sect:conclusions}

It is clear that the KWOS algorithm is better than the WWOS algorithm, when it is applicable (i.e., in the Yukawa case).  Our experiments in Section \ref{sect:comparisons} suggest that, at least on low dimensions, WWOS algorithm is more stable than the DWOS algorithm.  Consequently, it seems that the DWOS algorithm is best used in high dimensions, where adding one extra dimension should not make much difference.  However, the DWOS algorithm is an extension, not a modification, of the classical WOS algorithm, i.e., if one has WOS already implemented, implementing DWOS is simply a matter of giving different input parameters to the WOS algorithm.

\end{document}